\def\ps@pprintTitle{%
   \let\@oddhead\@empty
   \let\@evenhead\@empty
   \let\@oddfoot\@empty
   \let\@evenfoot\@empty
}
\newtheorem{definition}{Definition}
\newtheorem{theorem}{Theorem}
\newtheorem{lemma}{Lemma}
\journal{}
\begin{document}

\begin{frontmatter}



\title{Stationary Solution of $p$-Order Cloud Model via Stochastic Recurrence Equation}


\author[a]{Biao Hu\corref{mycorrespondingauthor}}
\cortext[mycorrespondingauthor]{School of Big Data and Internet of Things, Chongqing Vocational Institute of Engineering, Chongqing 402260, China.}
\ead{cs_hubiao@163.com}


\author[a]{Minyue Wang}

\affiliation[a]{organization={School of Big Data and Internet of Things},
            addressline={Chongqing Vocational Institute of Engineering}, 
            city={Chongqing},
            postcode={402260}, 
            country={China}}


\begin{abstract}
This paper investigates the generative mechanism of the $p$-order cloud model, which is a mathematical framework for representing uncertainty with applications in image processing, evaluation, and decision-making systems. By employing a reparameterization technique, we reformulate the cloud model as a stochastic recurrence equation (SRE) with a nonlinear transformation involving an absolute value. Under standard assumptions of stationarity, ergodicity, and an appropriate integrability condition, we establish the existence and uniqueness of a stationary solution. In particular, we demonstrate that the logarithmic moment of the model’s coefficient, modeled as a standard normal random variable, is negative, thereby ensuring almost sure convergence. These results provide new insights into the stochastic stability of cloud models and offer a rigorous foundation for further theoretical and practical developments in uncertainty quantification.
\end{abstract}



\begin{keyword}
Cloud model \sep Stochastic recurrence equation \sep Stationary solution \sep Ergodicity
\end{keyword}

\end{frontmatter}



\section{Introduction}
\label{Introduction}

Cloud model\citep{li1995membership} is a mathematical framework for representing uncertainty. It has been extensively applied in image processing\citep{dai2023optimized}, evaluation\citep{liu2023risk,zhou2021operation}, and intelligent decision-making systems\citep{liu2021multistage,song2021goal}. Theoretical studies mainly focus on the Gaussian cloud model (hereafter referred to as the cloud model, unless stated otherwise), with key topics including parameter estimation\citep{xu2014new} and similarity measurement\citep{dai2022uncertainty, li2011similarity}. Studies on its mathematical properties, particularly the statistical characteristics of second-order cloud models, are relatively well explored. Notably, the probability density function of the random variable $X$, formed by cloud droplets $x$, lacks an analytical solution and exhibits sharp peaks and heavy tails\citep{liu2005Statistical}. Research on higher-order cloud models primarily involves moment analysis and probability density estimation\citep{liu2012Characters}. Most existing studies involve statistical analyses of predefined cloud models or conceptual discussions limited to their fundamental definitions. However, the generative mechanism of cloud models remains underexplored. This study investigates the implicit stochastic process in $p$-order cloud models.  

This paper applies the reparameterization trick\citep{rezende2014stochastic} to redefine the $p$-order cloud model\citep{wang2013p}, making its generative process explicit. We show that the $p$-order cloud model is a specific form of the stochastic recurrence equation (SRE)\citep{kesten1973random} and analyze its limiting distribution.  

\section{Preliminaries}
\label{Preliminaries}

\subsection{Cloud model}

Cloud model quantifies statistical information using numerical characteristics, analogous to a Gaussian distribution defined by its mean and variance. A $p$-order cloud model is characterized by $p+1$ numerical parameters: $En_1, En_2, \dots, En_{p-1}, En_p, He$, where $Ex = En_1$ represents the expectation, and the remaining $p$ parameters quantify the model's uncertainty. The recursive formulation of the $p$-order cloud model\citep{wang2013p} is presented in Definition \ref{def1}. We reformulate Definition \ref{def1} via the reparameterization trick, as shown in Definition \ref{def2}.  

\begin{definition} \label{def1}
A random variable $X$ is said to follow a $p$-order cloud model, parameterized by $p+1$ numerical characteristics:  
$$
En_1, En_2, \dots, En_{p-1}, En_p, He
$$
where $He > 0$. This is denoted as $X \sim \mathcal{CM}(En_1, En_2, \dots, En_{p-1}, En_p, He)$.  

As defined in Equation (\ref{E1}), the $p$-th iteration of the Gaussian random realization yields $x_p$, termed a cloud drop of the $p$-order cloud model, representing a sample from $X$:

\begin{equation} \label{E1}
x_i = \left\{
\begin{aligned}
&R_N(En_p, He),& \quad i=1  \\
&R_N(En_{p-(i-1)},|x_{i-1}|),& \quad 2\leq i \leq p
\end{aligned}
\right.
\end{equation}
where $R_N(\mu, \sigma)$ denotes a realization of a Gaussian random variable with mean $\mu$ and variance $\sigma^2$, i.e., $X \sim \mathcal{N}(\mu, \sigma^2)$.
\end{definition}

\begin{definition} \label{def2}
Let the random variable $X$ be parameterized by a $p$-order cloud model, denoted as
$$
X \sim \mathcal{CM}(En_1, En_2, \dots, En_{p-1}, En_p, He).
$$
Then, its realization follows the recurrence relation:
\begin{equation} \label{E2}
x_i = \left\{
\begin{aligned}
&En_p + He \cdot \epsilon, &\quad i=1  \\
&En_{p-(i-1)} + |x_{i-1}| \cdot \epsilon, &\quad 2\leq i \leq p
\end{aligned}
\right.
\end{equation}
where $\epsilon \sim \mathcal{N}(0,1)$ is a standard normal random variable.
\end{definition}

\subsection{Stochastic recurrence equation}

We provide a concise introduction to the stochastic recurrence equation (SRE) and its stationary distribution. The SRE is given by  
\begin{equation} \label{E3}
X_{n+1} = A_{n+1} X_{n} + B_{n+1}, \quad n \in \mathbb{Z},
\end{equation}  
where $\{(A_i, B_i), i \in \mathbb{Z} \}$ is a stationary and ergodic sequence, and $X_0$ is an initial value independent of $(A_i, B_i)$.  

When the stochastic recurrence equation meets the conditions of Theorem \ref{Theorem1}, the sequence $X_n$ converges to a random variable $X$. The limiting variable $X$ is the unique solution to  
\begin{equation} \label{E4}
X \overset{d}{=} A X + B,
\end{equation}  
where $A$ and $B$ are random variables representing a realization of $(A_n, B_n)$. Here, $\overset{d}{=}$ denotes equality in distribution. If the process \eqref{E3} is initialized with $X_0 = X$, it remains stationary, meaning that $X_n$ is invariant in distribution over time.

\begin{theorem}[\cite{brandt1986stochastic}]\label{Theorem1}
Let $\{(A_n, B_n)\}$ be a stationary and ergodic sequence. If either of the following conditions holds:
\begin{equation} \label{E5}
\mathbb{E}[\log{|A_n|}]<0 \quad \text{and} \quad \mathbb{E}[\log^{+}|B_n|]<\infty,
\end{equation}
where $\log^{+}x = \max\{\log{x},0\}$,  
then the stochastic recurrence equation (\ref{E3}) admits a unique stationary solution, given by  
\begin{equation} \label{E7}
X_n = \sum_{k=0}^{\infty} \left( \prod_{i=1}^{k} A_{n-i} \right) B_{n-k-1}.
\end{equation}
\end{theorem}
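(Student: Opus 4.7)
The plan is to show that the explicit series in (\ref{E7}) defines the unique stationary solution of (\ref{E3}) in three steps: (i) establish almost sure convergence of the series under the hypotheses in (\ref{E5}), (ii) verify that the resulting process satisfies the SRE, and (iii) prove uniqueness among stationary solutions. The starting point is to iterate (\ref{E3}) backward to obtain, for any $k\ge 1$,
$$X_n = \left(\prod_{i=0}^{k-1} A_{n-i}\right) X_{n-k} + \sum_{j=0}^{k-1}\left(\prod_{i=0}^{j-1} A_{n-i}\right) B_{n-j},$$
where the empty product equals $1$. Sending $k\to\infty$ formally suggests the candidate (\ref{E7}), so the task is to justify both that the telescoping remainder disappears and that the partial sums converge almost surely.

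The core analytic step is a pointwise estimate on the logarithm of the $k$-th summand,
$$\log\left|\left(\prod_{i=1}^{k} A_{n-i}\right) B_{n-k-1}\right| = \sum_{i=1}^{k}\log|A_{n-i}| + \log|B_{n-k-1}|.$$
By Birkhoff's ergodic theorem applied to the stationary ergodic sequence $\{\log|A_n|\}$, the average of the first sum converges almost surely to $\mathbb{E}[\log|A_n|]=:-c<0$, so the sum itself behaves like $-ck$. For the second term, I would invoke the standard Borel--Cantelli consequence of $\mathbb{E}[\log^{+}|B_n|]<\infty$: for every $\varepsilon>0$, the events $\{\log^{+}|B_{n-k-1}|>\varepsilon k\}$ are summable in $k$, hence $\log^{+}|B_{n-k-1}|/k\to 0$ almost surely. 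Combining the two estimates shows the $k$-th summand decays geometrically (at rate $e^{-ck/2}$, say), which yields absolute convergence of (\ref{E7}) almost surely. The hardest part is precisely this control: the hypothesis on $B_n$ is only a logarithmic moment, so standard $L^1$ dominated-convergence arguments are unavailable, and one must pair the geometric decay coming from the ergodic theorem with the subexponential growth bound on $|B_{n-k}|$.

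Once the series is well defined, verifying that it solves (\ref{E3}) is a matter of shifting the summation index by one and comparing with $A_{n+1}X_n + B_{n+1}$. Stationarity is then automatic, because $X_n$ is a measurable function of the stationary shift of the full sequence $\{(A_i,B_i):i\le n\}$.

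Finally, for uniqueness, suppose $\{Y_n\}$ is any other stationary solution of (\ref{E3}). Then $Y_n - X_n$ satisfies the homogeneous recurrence, giving
$$Y_n - X_n = \left(\prod_{i=0}^{k-1} A_{n-i}\right)(Y_{n-k}-X_{n-k})$$
for all $k\ge 1$. The product tends to $0$ almost surely by the same ergodic argument, while $(Y_{n-k}-X_{n-k})$ is tight by stationarity; hence the right-hand side converges to $0$ in probability, forcing $Y_n = X_n$ almost surely and completing the proof.
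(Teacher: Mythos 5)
The paper never proves Theorem~\ref{Theorem1}: it is imported verbatim from \cite{brandt1986stochastic} and used as a black box, so your proposal has to be measured against the classical argument, which it reproduces correctly. The three ingredients are all sound: Birkhoff's ergodic theorem gives $\sum_{i=1}^{k}\log|A_{n-i}|\sim -ck$, the first Borel--Cantelli lemma (no independence needed, only summability of $P(\log^{+}|B_{0}|>\varepsilon k)$, which follows from $\mathbb{E}[\log^{+}|B_{n}|]<\infty$ and stationarity) gives $\log^{+}|B_{n-k-1}|=o(k)$, hence almost sure absolute convergence of the series; stationarity follows because $X_n$ is a fixed measurable functional of the shifted input sequence; and uniqueness follows from the homogeneous recursion, the a.s.\ vanishing of the partial products, and tightness of the stationary difference, forcing convergence in probability of a quantity that does not depend on $k$. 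Two small points to tighten. First, index bookkeeping: backward iteration of (\ref{E3}) as written yields the candidate $\sum_{k\ge 0}\bigl(\prod_{i=0}^{k-1}A_{n-i}\bigr)B_{n-k}$, whereas the displayed series (\ref{E7}) satisfies $X_{n+1}=A_{n}X_{n}+B_{n}$ (Brandt's indexing); this mismatch is inherited from the paper itself, but your phrase ``formally suggests (\ref{E7})'' quietly absorbs the shift, so pick one convention and carry it through the verification step. Second, the hypothesis $\mathbb{E}[\log|A_n|]<0$ permits the value $-\infty$ (and tacitly presumes $\mathbb{E}[\log^{+}|A_n|]<\infty$), so your fixed rate $c$ should read ``any finite $c>0$''; the argument is unchanged. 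Compared with the paper's closest internal analogue, the proof of Theorem~\ref{Theorem2}, you work directly with the explicit series, which delivers the solution formula (\ref{E7}) for free but relies on linearity; the paper instead builds partial solutions started from zero at time $-k$ and passes to the limit, an approach that sacrifices the closed form but extends to the nonlinear recursion with $|X_n|$. Your uniqueness step, which makes the tightness/stationarity reasoning explicit, is in fact more complete than the corresponding passage in the paper's Theorem~\ref{Theorem2} proof.
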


\section{Stochastic stability of $p$-order cloud model}
 In this section, we analyze the stochastic stability of the $p$-order cloud model. First, based on Definition \ref{def2}, we show that the $p$-order cloud model can be formulated as a special case of a stochastic recurrence equation (SRE). Then, within the SRE framework, we investigate its stochastic stability and prove that, under the conditions of Theorem 2, the $p$-order cloud model admits a unique stationary solution.

Within the context of stochastic recurrence equations, the sequences $\{En_{p-i}\}$ and $\{\epsilon_i\}$ correspond to $\{B_i\}$ and $\{A_i\}$, respectively, for $i \in \mathbb{Z}$, with the initial condition $X_0 = He$. Consequently, the $p$-order cloud model can be interpreted as a stochastic recurrence equation with a nonlinear transformation involving the absolute value, as given by the following equation:  
\begin{equation} \label{E8}
X_{n+1} = A_{n+1} |X_{n}| + B_{n+1}, \quad n \in \mathbb{Z},
\end{equation}
where $A \sim \mathcal{N}(0, I)$.

\begin{theorem}\label{Theorem2}
    If the sequence $\{(A_n, B_n)\}$ is stationary and ergodic, and the following condition is satisfied:  
\begin{equation*}  
\mathbb{E}[\log{|A_n|}] < 0 \quad \text{and} \quad \mathbb{E}[\log^{+}|B_n|] < \infty,  
\end{equation*}  
then Equation (\ref{E8}) has a unique stationary solution.
\end{theorem}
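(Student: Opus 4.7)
The plan is to prove Theorem~\ref{Theorem2} by adapting the backward-iteration argument behind Theorem~\ref{Theorem1} to the absolute-value nonlinearity. The key observation is that the random map $T_n(x) := A_n |x| + B_n$ is Lipschitz with constant $|A_n|$: by the reverse triangle inequality $\bigl||x|-|y|\bigr| \le |x-y|$, we have
\[
|T_n(x) - T_n(y)| \;\le\; |A_n|\cdot|x-y|.
\]
Hence the composition $T_n \circ T_{n-1} \circ \cdots \circ T_{n-k+1}$ is Lipschitz with constant $\prod_{i=0}^{k-1}|A_{n-i}|$, and Birkhoff's ergodic theorem applied under the hypothesis $\mathbb{E}[\log|A_n|] < 0$ makes this product decay exponentially almost surely.

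For existence I would construct the candidate stationary process by backward iteration: set $X_n^{(0)} := 0$ and $X_n^{(m)} := T_n(X_{n-1}^{(m-1)})$, so that $X_n^{(m)}$ is the iterate obtained by running the recursion forward $m$ times from $0$ at time $n-m$. The Lipschitz estimate yields the telescoping bound
\[
|X_n^{(m+1)} - X_n^{(m)}| \;\le\; \Bigl(\prod_{i=0}^{m-1}|A_{n-i}|\Bigr)\,|B_{n-m}|.
\]
Combining the almost-sure exponential decay of the product with the condition $\mathbb{E}[\log^{+}|B_n|] < \infty$ (which, via a Borel--Cantelli/Kronecker argument exactly as in \citep{brandt1986stochastic}, forces $\frac{1}{m}\log^{+}|B_{n-m}| \to 0$ a.s.), the series $\sum_m |X_n^{(m+1)} - X_n^{(m)}|$ converges almost surely. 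Thus $X_n^{(m)}$ admits an almost-sure limit $X_n$, and passing to the limit in $X_n^{(m+1)} = T_n(X_{n-1}^{(m)})$ using continuity of $T_n$ yields $X_n = A_n|X_{n-1}| + B_n$. Stationarity follows because $X_n$ is a measurable functional of the shifted ergodic sequence $\{(A_i,B_i)\}_{i\le n}$.

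For uniqueness, if $X_n'$ is any other stationary solution, iterating the Lipschitz bound $k$ times gives
\[
|X_n - X_n'| \;\le\; \Bigl(\prod_{i=0}^{k-1}|A_{n-i}|\Bigr)\cdot|X_{n-k} - X_{n-k}'|.
\]
By stationarity $|X_{n-k} - X_{n-k}'|$ is tight in $k$, while the product tends to $0$ almost surely, so the right-hand side vanishes in probability, forcing $X_n = X_n'$ almost surely.

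The main obstacle will be that the closed-form representation (\ref{E7}) is unavailable here, because the absolute value blocks the distributive expansion that produces the tidy series $\sum (\prod A_{n-i}) B_{n-k-1}$. Working with telescoping differences of backward iterates is the device that salvages the argument: every composition step still picks up a single factor $|A_\cdot|$ thanks to the Lipschitz property, which is all the ergodic-average control of $\log|A_n|$ needs. The delicate analytic point is upgrading the weak integrability hypothesis $\mathbb{E}[\log^{+}|B_n|] < \infty$ (rather than any moment of $|B_n|$ itself) into almost-sure summability of the telescoping series, and this is handled by the same Borel--Cantelli reasoning used in Brandt's original proof of Theorem~\ref{Theorem1}.
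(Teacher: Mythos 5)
Your proposal is correct, and at its core it follows the same strategy as the paper: approximate the stationary solution by iterating forward from zero started at ever earlier times, and exploit that $x \mapsto A_n|x| + B_n$ is $|A_n|$-Lipschitz (reverse triangle inequality) so that the ergodic theorem applied to $\log|A_n|$ provides the almost-sure geometric contraction. The differences are in the mechanics, and they favor you. For existence, the paper dominates $|X_n^{(-k)}|$ by the linear SRE $Y_{n+1}=|A_n|Y_n+|B_n|$, concludes uniform boundedness in $k$, and then simply asserts that "a standard Cauchy sequence argument" gives convergence; boundedness alone does not yield the Cauchy property, so the published argument leaves exactly the step that your telescoping bound $|X_n^{(m+1)}-X_n^{(m)}| \le \bigl(\prod_{i=0}^{m-1}|A_{n-i}|\bigr)|B_{n-m}|$, combined with the Borel--Cantelli consequence $\frac{1}{m}\log^{+}|B_{n-m}|\to 0$ a.s.\ of $\mathbb{E}[\log^{+}|B_n|]<\infty$, actually supplies. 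You also make stationarity of the limit explicit (it is a shift-equivariant measurable functional of $\{(A_i,B_i)\}_{i\le n}$), which the paper does not address at all in the existence part. For uniqueness, the paper iterates forward from time $0$, gets $|\Delta_n|\to 0$ a.s., and then appeals somewhat loosely to stationarity to conclude $\Delta_n \equiv 0$; your backward iteration $|X_n - X_n'| \le \bigl(\prod_{i=0}^{k-1}|A_{n-i}|\bigr)|X_{n-k}-X_{n-k}'|$, with tightness of the last factor from the marginal stationarity of each solution, yields $X_n = X_n'$ a.s.\ at the fixed time $n$ directly and is the cleaner (and standard, Brandt-style) way to close the argument.
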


\begin{proof}
For each integer $k \ge 1$, define a partial solution $\{X_n^{(-k)}\}_{n \ge -k}$ by setting
$$
X_{-k}^{(-k)}(\omega) = 0,
$$

$$
X_{n+1}^{(-k)}(\omega) = A_n(\omega)\,|X_n^{(-k)}(\omega)| + B_n(\omega), \quad n \ge -k,
$$
where $\omega$ is an element of the probability space $(\Omega, \mathcal{F}, P)$. For each fixed $n \in \mathbb{Z}$ (with $-k \le n$), consider the sequence $\{X_n^{(-k)}(\omega)\}$. We show that this sequence converges almost surely as $k \to \infty$ and define
$$
X_n(\omega) = \lim_{k \to \infty} X_n^{(-k)}(\omega).
$$
Define the auxiliary sequence
$$
\tilde{X}_n^{(-k)} := |X_n^{(-k)}|.
$$
Taking absolute values in the recursion, we obtain
$$
|X_{n+1}^{(-k)}| \le |A_n|\,|X_n^{(-k)}| + |B_n|,
$$
or equivalently,
$$
\tilde{X}_{n+1}^{(-k)} \le |A_n|\,\tilde{X}_n^{(-k)} + |B_n|.
$$
Now, consider the stochastic recurrence equation
$$
Y_{n+1} = |A_n|\,Y_n + |B_n|, \quad Y_{-k} = 0.
$$
By standard results \citep{brandt1986stochastic} and the ergodic theorem, the product $\prod_{i=j+1}^{n-1} |A_i|$ decays exponentially almost surely due to $\mathbb{E}[\log |A_n|] < 0$. Consequently, the series
$$
\sum_{j=-k}^{n-1} \left( \prod_{i=j+1}^{n-1} |A_i| \right) |B_j|
$$
converges absolutely almost surely. Thus, the solution $Y_n$ is bounded. Since $\tilde{X}_n^{(-k)} \le Y_n$, it follows that $\tilde{X}_n^{(-k)}$ is uniformly bounded with respect to $k$. By a standard Cauchy sequence argument, for each fixed $n$, the sequence $\{X_n^{(-k)}\}$ is Cauchy almost surely. Therefore, the limit $X_n(\omega) = \displaystyle\lim_{k \to \infty} X_n^{(-k)}(\omega)$ exists for almost all $\omega$.

Assume that there exist two stationary solutions, $\{X_n\}$ and $\{X'_n\}$, satisfying
$$
X_{n+1} = A_n\,|X_n| + B_n \quad \text{and} \quad X'_{n+1} = A_n\,|X'_n| + B_n, \quad \forall\, n \in \mathbb{Z}.
$$
Define the difference $\Delta_n = X_n - X'_n$. Subtracting the two recursions yields $\Delta_{n+1} = A_n\left(|X_n| - |X'_n|\right)$. Taking absolute values and using the inequality
$$
\bigl||X_n| - |X'_n|\bigr| \le |X_n - X'_n| = |\Delta_n|,
$$
we obtain $|\Delta_{n+1}| \le |A_n|\,|\Delta_n|$.
By iterating this inequality, we get
$$
|\Delta_n| \le \left(\prod_{i=0}^{n-1} |A_i|\right) |\Delta_0|.
$$
Since $\mathbb{E}[\log |A_n|] < 0$, the ergodic theorem \citep{vervaat1979stochastic} implies that
$$
\lim_{n \to \infty} \prod_{i=0}^{n-1} |A_i| = 0 \quad \text{a.s.}
$$
Hence, $|\Delta_n| \to 0$ almost surely. As both solutions are stationary (i.e., their finite-dimensional distributions are time-invariant), it follows that
$$
X_n = X'_n, \quad \forall\, n \in \mathbb{Z}, \quad \text{a.s.}
$$
This establishes the uniqueness of the stationary solution.

\end{proof}

\begin{lemma}\label{lem:stationary_ergodic}
Let $\{\epsilon_i, i \in \mathbb{Z}\}$ be a sequence of independent and identically distributed random variables, and let $\{E_{n_i}, i \in \mathbb{Z}\}$ be a stationary and ergodic sequence. Assume that $\{E_{n_i}, i \in \mathbb{Z}\}$ is independent of $\{\epsilon_i, i \in \mathbb{Z}\}$. Then, the sequence $\{(\epsilon_i, E_{n_i}), i \in \mathbb{Z}\}$ is stationary and ergodic.
\end{lemma}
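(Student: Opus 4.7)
The plan is to establish stationarity and ergodicity separately, exploiting the product structure implied by the independence assumption. I would view the joint process $\{(\epsilon_i, E_{n_i})\}_{i \in \mathbb{Z}}$ as a random element of the path space $(\mathbb{R}^2)^{\mathbb{Z}}$, whose law, by independence of the two sequences, is the product measure $\mu_\epsilon \otimes \mu_E$ of the two individual laws. Let $T$ denote the bilateral shift on $(\mathbb{R}^2)^{\mathbb{Z}}$; it factors as $T = T_\epsilon \times T_E$, where $T_\epsilon$ and $T_E$ are the shifts on each coordinate path space.

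For \emph{stationarity}, I would fix indices $i_1, \ldots, i_k, j_1, \ldots, j_m$ and Borel sets $A_1, \ldots, A_k, B_1, \ldots, B_m$. By independence,
\begin{equation*}
P\!\left(\epsilon_{i_1+n} \in A_1, \ldots, \epsilon_{i_k+n} \in A_k,\; E_{n_{j_1+n}} \in B_1, \ldots, E_{n_{j_m+n}} \in B_m\right)
\end{equation*}
factorises into a product of marginal probabilities. The i.i.d. property makes the first factor independent of $n$, and stationarity of $\{E_{n_i}\}$ does the same for the second. Hence shift-invariance of the finite-dimensional distributions holds on the $\pi$-system of product cylinders, and a Dynkin/monotone-class argument extends it to the Borel $\sigma$-algebra on $(\mathbb{R}^2)^{\mathbb{Z}}$.

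For \emph{ergodicity}, I would use the fact that an i.i.d.\ sequence generates a strongly mixing (in fact Bernoulli) dynamical system, and invoke the classical result that the Cartesian product of a mixing system and an ergodic system is ergodic. The concrete verification proceeds on rectangles $A \times B$ and $A' \times B'$: by the product structure,
\begin{equation*}
(\mu_\epsilon \otimes \mu_E)\!\left(T^{-n}(A\times B) \cap (A'\times B')\right) = \mu_\epsilon\!\left(T_\epsilon^{-n} A \cap A'\right)\,\mu_E\!\left(T_E^{-n} B \cap B'\right).
\end{equation*}
Mixing of the $\epsilon$-factor gives pointwise convergence $\mu_\epsilon(T_\epsilon^{-n}A \cap A') \to \mu_\epsilon(A)\mu_\epsilon(A')$, and ergodicity of the $E$-factor gives the Ces\`aro convergence of $\mu_E(T_E^{-n} B \cap B')$ to $\mu_E(B)\mu_E(B')$. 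Combining the two via bounded convergence yields the Ces\`aro ergodicity criterion on rectangles, which extends to the full product $\sigma$-algebra by another monotone-class argument.

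The main obstacle is this ergodicity step. Stationarity and the fact that i.i.d.\ sequences are mixing are standard; the technical point is justifying that a \emph{pointwise} mixing limit for one factor can be paired with only a \emph{Ces\`aro} limit for the ergodic factor and still yield ergodicity for the joint shift. Once the rectangle-level Ces\`aro convergence is established, the extension to arbitrary measurable invariant sets is routine, so the subtlety lies in invoking the correct product lemma and in carefully handling the bilateral shift on the infinite product space.
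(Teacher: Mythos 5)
Your argument is correct, but it is genuinely different from what the paper does: the paper offers no proof at all, only a one-line deferral to standard properties of independent, stationary ergodic processes with a citation to Gray (2009). You supply the actual content. Your stationarity step (factorization of finite-dimensional distributions over product cylinders plus a $\pi$-system argument) is exactly the routine part, and your ergodicity step correctly identifies the non-trivial ingredient: the product of two ergodic systems need not be ergodic, so one must use that the i.i.d.\ factor is Bernoulli, hence mixing, and invoke the classical lemma that mixing $\times$ ergodic is ergodic. The point you flag as the obstacle is handled by an elementary Ces\`aro fact rather than bounded convergence: if $a_n \to a$ and $\frac{1}{N}\sum_{n<N} b_n \to b$ with $|b_n|\le 1$, then $\frac{1}{N}\sum_{n<N} a_n b_n \to ab$, since $\frac{1}{N}\sum_{n<N}(a_n-a)b_n \to 0$; applied with $a_n=\mu_\epsilon(T_\epsilon^{-n}A\cap A')$ and $b_n=\mu_E(T_E^{-n}B\cap B')$ this gives the rectangle-level Ces\`aro criterion. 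The extension beyond rectangles is most cleanly done by approximation on the generating algebra of finite unions of cylinders (the discrepancy $|\mu(T^{-n}C\cap D)-\mu(T^{-n}C'\cap D)|\le \mu(C\,\triangle\, C')$ is uniform in $n$), which is the standard form of the criterion; a monotone-class phrasing also works for the same uniformity reason. Compared with the paper's citation, your route is self-contained and makes explicit why independence plus the i.i.d.\ structure (not mere ergodicity) of $\{\epsilon_i\}$ is what rescues joint ergodicity, which is precisely the hypothesis the paper relies on implicitly.
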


\begin{proof}
The proof follows directly from the properties of independent and stationary ergodic processes. For a detailed discussion, refer to \citep{gray2009probability}.
\end{proof}

\begin{theorem}[Existence and Uniqueness of Stationary Solution] \label{thm:stationary_solution}
Suppose the following assumptions are satisfied:
\begin{enumerate}[label=(\textbf{A\arabic*}), leftmargin=*, align=left]
    \item \textbf{Stationarity and Ergodicity of $\{En_i\}$}:  
    The sequence $\{En_i, i \in \mathbb{Z}\}$ is assumed to be stationary and ergodic.

    \item \textbf{Independence between $\{En_i\}$ and $\{\epsilon_i\}$}:  
    The sequence $\{En_i\}$ is independent of the i.i.d. sequence $\{\epsilon_i, i \in \mathbb{Z}\}$, where $\{\epsilon_i\}$ is defined in Definition~\ref{def2}.

    \item \textbf{Integrability Condition}:  
    The logarithmic moment condition holds, i.e., $\mathbb{E}\left[\log^{+}|En_i|\right] < \infty$.
\end{enumerate}
Under these conditions, the cloud model admits a unique stationary solution.
\end{theorem}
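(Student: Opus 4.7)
The plan is to verify that the recurrence $X_{n+1} = A_{n+1}|X_n| + B_{n+1}$ from Equation~\eqref{E8}, with $A_i = \epsilon_i$ and $B_i$ identified with the shifted sequence $\{En_{p-i}\}$, satisfies all three hypotheses of Theorem~\ref{Theorem2}; once this is done, existence and uniqueness of a stationary solution follow immediately by applying that theorem.

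First, I would establish the stationarity and ergodicity of the driving pair $\{(A_n, B_n)\}$. The sequence $\{\epsilon_i\}$ is i.i.d.\ and, by (\textbf{A2}), independent of the stationary and ergodic sequence $\{En_i\}$ guaranteed by (\textbf{A1}); Lemma~\ref{lem:stationary_ergodic} therefore yields that $\{(\epsilon_i, En_i)\}$ is stationary and ergodic, and this property is preserved for $\{(A_n, B_n)\}$ under the deterministic index reshuffle $i \mapsto p-i$. The integrability condition $\mathbb{E}[\log^{+}|B_n|] < \infty$ is then a direct consequence of (\textbf{A3}), since each $B_n$ has the same distribution as some $En_j$.

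The main obstacle, and the only genuinely nontrivial step, is verifying the strict negativity $\mathbb{E}[\log|A_n|] = \mathbb{E}[\log|\epsilon|] < 0$ for $\epsilon \sim \mathcal{N}(0,1)$. I would treat this as a deterministic integral computation: by symmetry of the Gaussian density and the substitution $u = x^{2}/2$, the expectation reduces to a Gamma-type integral,
$$
\mathbb{E}[\log|\epsilon|] \;=\; \frac{1}{2\sqrt{\pi}}\int_{0}^{\infty}\bigl(\log 2 + \log u\bigr)\,u^{-1/2} e^{-u}\,du.
$$
Using $\Gamma(1/2) = \sqrt{\pi}$ together with the identity $\Gamma'(1/2) = \sqrt{\pi}\,\psi(1/2) = \sqrt{\pi}(-\gamma - 2\log 2)$, the two contributions collapse to the closed form $\mathbb{E}[\log|\epsilon|] = -\tfrac{1}{2}(\gamma + \log 2)$, which is strictly negative since both the Euler--Mascheroni constant $\gamma$ and $\log 2$ are positive. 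With all three hypotheses of Theorem~\ref{Theorem2} thereby verified, its application to Equation~\eqref{E8} delivers the unique stationary solution for the $p$-order cloud model.
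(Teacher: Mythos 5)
Your proposal is correct and follows essentially the same route as the paper: invoke Lemma~\ref{lem:stationary_ergodic} for the stationarity and ergodicity of the driving pair, read off $\mathbb{E}[\log^{+}|B_n|]<\infty$ from (\textbf{A3}), and reduce $\mathbb{E}[\log|\epsilon|]$ via the substitution $u=a^2/2$ to Gamma-function identities yielding $-\tfrac{1}{2}(\gamma+\log 2)<0$, then apply Theorem~\ref{Theorem2}. Your brief remark that the deterministic reindexing $i\mapsto p-i$ preserves stationarity and ergodicity is a small point the paper leaves implicit, but otherwise the arguments coincide.
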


\begin{proof}
By Lemma \ref{lem:stationary_ergodic}, the sequence $\{(\epsilon_i, En_i), i \in \mathbb{Z}\}$ is stationary and ergodic. Here, I use $A$ to represent $\epsilon$, where $A \sim \mathcal{N}(0, I)$ has the probability density function (pdf):
$$
\phi(a) = \frac{1}{\sqrt{2\pi}} \exp\left(-\frac{a^2}{2}\right).
$$
The expectation $\mathbb{E}[\log|A|]$ is given by:
$$
\mathbb{E}[\log|A|] = \int_{-\infty}^\infty \log|a| \cdot \phi(a) \, \mathrm{d}a = 2 \int_0^\infty \log a \cdot \frac{1}{\sqrt{2\pi}} \exp\left(-\frac{a^2}{2}\right) \, \mathrm{d}a.
$$
Substituting $u = a^2/2$, which implies $a = \sqrt{2u}$ and $\mathrm{d}a = \frac{1}{\sqrt{2u}} \mathrm{d}u$, the integral becomes:
$$
\mathbb{E}[\log|A|] = \frac{2}{\sqrt{2\pi}} \int_0^\infty \log\left(\sqrt{2u}\right) \frac{1}{\sqrt{2u}} \exp(-u) \, \mathrm{d}u.
$$
Simplifying $\log\left(\sqrt{2u}\right) = \frac{1}{2}(\log 2 + \log u)$, we obtain:
$$
\mathbb{E}[\log|A|] = \frac{1}{\sqrt{\pi}} \left[ \frac{\log 2}{2} \int_0^\infty \frac{1}{\sqrt{2u}} \exp(-u) \, \mathrm{d}u + \frac{1}{2} \int_0^\infty  \frac{\log u}{\sqrt{2u}} \exp(-u) \, \mathrm{d}u \right].
$$
Using the properties of the Gamma function $\Gamma(s) = \int_0^\infty u^{s-1} \exp(-u) \, \mathrm{d}u$ and its derivative $\Gamma'(s)$, we know:
\begin{itemize}
    \item $\int_0^\infty u^{-1/2} \exp(-u) \, \mathrm{d}u = \Gamma\left(\frac{1}{2}\right) = \sqrt{\pi}$.
    \item $\int_0^\infty \log u \cdot u^{-1/2} \exp(-u) \, \mathrm{d}u = \Gamma'\left(\frac{1}{2}\right) = \Gamma\left(\frac{1}{2}\right) \psi\left(\frac{1}{2}\right)$, where $\psi(s) = \frac{\mathrm{d}}{\mathrm{d}s} \log \Gamma(s)$ is the digamma function.
\end{itemize}
The known values are $\Gamma\left(\frac{1}{2}\right) = \sqrt{\pi}$ and $\psi\left(\frac{1}{2}\right) = -\gamma - 2\log 2$, where $\gamma \approx 0.5772$ is the Euler-Mascheroni constant. Substituting these values, we obtain:
$$
\mathbb{E}[\log|A|] = \frac{1}{\sqrt{\pi}} \left[ \frac{\log 2}{2} \sqrt{\pi} + \frac{\sqrt{\pi}}{2} (-\gamma - 2\log 2) \right] = \frac{-\gamma - \log 2}{2} < 0.
$$
Therefore, the conditions in Theorem \ref{Theorem2} are satisfied, completing the proof.
\end{proof}




\bibliographystyle{elsarticle-num-names}
\bibliography{references}

\end{document}